\newtheorem{definition}{Definition}[section]
\newtheorem{theorem}[definition]{Theorem}
\newtheorem{lemma}[definition]{Lemma}
\newtheorem{corollary}[definition]{Corollary}
\def\M{{\mathsf{M}}}
\def\L{{\mathsf{L}}}
\def\G{{\cal G}}
\def\B{\mathsf{B}}
\def\luc{{\mathsf{LUC}}}
\def\zt{{\Lambda}({\luc}(\G)^*)}
\def\rp{\mathsf{RP}}
\def\BLip{\mathsf{BLip_b}}
\def\lu{{{\luc}(\G)^*}}
\def\weakstartweakstar{{w^*\hskip-2mm-\hskip-1mm w^*}}
\def\UMeas{\mathsf{M_u}}
\def\Cont{\mathsf{Cont}}
\def\iin{\!\in\!}
\title{\bf Minimal sets determining the topological centre of
the algebra $\luc(\G)^*$}
\author{Stefano Ferri, Matthias Neufang and Jan
Pachl
\thanks{The first author was supported by the Faculty of
Sciences of Universidad de los Andes, via the
 \emph{Proyecto Semilla:
``Large groups and semigroups and their actions {\rm (II)\/}''\/} and
via a \emph{``Fondo de capacitaci\'on para visitas
academicas''\/}; the second author was partially supported by an NSERC Discovery Grant.
The third author benefited from working in the supportive
environment at the Fields Institute. The support is gratefully acknowl\-edged.
{This paper resulted from a visit of the first author to the
Fields Institute in September 2011 which allowed a meeting between
the three authors. The three authors wish to thank
the Fields Institute for the hospitality.}}}
\date{March 10, 2014 (version 2)}
\hfill\textsc{\small
Stefano Ferri, Jan Pachl and Matthias Neufang}\hfill}
\begin{document}
\maketitle
\begin{abstract}The study of the Banach algebra $\lu$
associated to a topological group $\G$ has been of
interest in abstract harmonic analysis.
In particular, several authors have studied
the topological centre
$\zt$ of this algebra, which is defined as the set of elements
$\mu\in\lu$ such that
left multiplication by $\mu$ is
$\weakstartweakstar$-continuous.
In recent years several works have appeared in
which it is shown that for a locally
compact group $\G$ it is sufficient to test
the continuity of the left translation by $\mu$ at
just one specific point in order
to determine whether $\mu\in\lu$ belongs to
$\zt$.
In this work we extend some of these
results to a much larger class of groups which
includes many non-locally compact groups as well
as all the locally compact ones. This answers a question
raised by H.G. Dales \cite{dales1}.
We also obtain a corollary about the topological centre
of any subsemigroup of $\lu$ containing
the uniform compactification $\G^{\luc}$ of $\G$. In
particular, we shall prove that there are sets of just
one point determining the topological centre of the
uniform compactification $\G^{\luc}$ itself.
\medskip

\noindent{\it Keywords\/}: Banach algebra,
Topological Centre Problem, Ambitable Group,
Semigroup Compactification, Uniform Measure,
Uniform Compactification, Convolution Algebra,
DTC-set.
\medskip

\noindent{\it Mathematics Subject Classification\/} (2010):
22A10, 43A10, 43A15.
\end{abstract}

\section{Introduction}

\medskip
Given a topological group $\G$, the dual space
$\lu$ of the space of bounded complex-valued left
uniformly continuous functions has a natural
structure of Banach algebra which has been studied
since the 1970's. In particular, the problem of
describing its topological
centre $\zt$ has been studied by many authors.
The question was first considered by A. Zappa in
\cite{zappa} for abelian groups and for locally compact groups 
was completely solved by A. T.-M. Lau in
\cite{lau}, where it was proved that for any such
group $\G$ the topological centre $\zt$ equals the measure
algebra $\M(\G)$.
The second author of this note in \cite{unified} considered the
topological centre problem, in the non-compact case, for $\mathrm{L}_1(\G)^{**}$ with the first
Arens product and for its quotient $\lu$ and,
using a factorization theorem similar to those appearing in
\cite[Satz 3.6.2]{neufangthesis}
and in \cite{equi}, proved that
${\Lambda}(\mathrm{L}_1(\G)^{**})=\mathrm{L}_1(\G)$ and that $\zt=\M(\G)$.
A similar factorization will be also used in this work.
The same problem has been studied for general
(not necessarily locally compact) groups in \cite{ferrineufang}
and in \cite{ambitable} where it was proved
that for a large class of groups --- which includes all the
locally compact groups as well as many non-locally
compact ones --- the topological centre $\zt$ is the
space of uniform measures on $\G$.

The definition of $\zt$ apparently involves a
requirement on the continuity of certain maps at
every point of $\lu$.
However, it was proved
in \cite{budak} and in \cite{dales} that for locally
compact groups it is possible to determine whether
an element of $\lu$ belongs to $\zt$ by testing
the same type of continuity at just a few specific
points of $\lu$. This led to the definition of a DTC-set
(meaning a set Determining the Topological Centre),
i.e., a set with the property that
it is sufficient to test continuity only at the points of this set
in order to decide whether a given element belongs to $\zt$.

Several flavours of DTC-sets have been considered in the literature,
differing in the type of continuity assumed at the points of the set.
Our main result deals with the DTC-sets in the sense of Definition~\ref{def:DTC}.
We shall prove a one-point DTC-set
theorem for the topological groups that
satisfy a fairly weak cardinality condition
(property ($\dagger$) in Theorem~\ref{th:mainresult}).
This answers a question posed in \cite{dales1}.
From this theorem we shall obtain as
a corollary a result about the topological centre of
any subsemigroup of
$\lu$ that contains
the uniform compactification $\G^{\luc}$
of $\G$; in fact, any subsemigroup that contains
$\G^{\luc}\setminus \G$.
In particular, we shall
prove that there are one-point DTC-sets for the
topological centre of the uniform compactification
$\G^{\luc}$ of $\G$ itself, extending a similar
result given in \cite{budak}.
\par

We start by introducing the basic notation and
terminology used throughout the note.
\par
Let $\G$ be a topological group, here always
assumed to be Hausdorff, with identity $e$.
Given a function $f:\G\longrightarrow X$ (where $X$
can be any range) and $x\iin\G$,
the left translate of $f$ by $x$ is the function $\L_xf$
defined by $\L_xf(y) = f(xy)$ for $y\iin\G$.

We denote by $\rp(\G)$ the set of all
right-invariant continuous pseudometrics on $\G$.
From now on we denote by $\G$ not only the group with its topology but
also the uniform space on the set $\G$ induced by $\rp(\G)$;
since we do not consider here any other uniform
structures on $\G$, this notation will not lead to
any ambiguity.
Then $\luc(\G)$ is the space of bounded
complex-valued uniformly continuous functions on $\G$
with the sup norm.
\par

Given $\nu\in\lu$ and $f\in\luc(\G)$ the function
$\nu\bullet f$, defined by
$$(\nu\bullet f)(x):=\langle
\nu,\L_xf\rangle\qquad(x\in\G),$$
is in $\luc(\G)$ (see for example~\cite{redbook}), i.e. $\luc(\G)$
is left introverted.

This operation induces the convolution operation on
$\lu$, defined by
$$\langle \mu\star \nu,f\rangle:=\langle
\mu,\nu\bullet f\rangle\qquad
(\mu,\nu\in\lu,\;f\in\luc(\G)),$$
which turns $\lu$ into a Banach algebra and
$\luc(\G)$ into a left $\lu$-module.

If we denote by $\delta(x)$ the point evaluation at $x$ ($x\in\G$)
and consider the $w^*$-closure of $\delta[\G]$ in $\lu$,
then it can be proved (see for example \cite{redbook}) that this set with
the induced product
is a semigroup compactification of $\G$ which
topologically coincides with the uniform
compactification of the uniform space $\G$.
This compactification, denoted here by $\G^{\luc}$,
coincides with the spectrum of the C$^*$-algebra $\luc(\G)$.
In the locally compact case this is the largest semigroup compactification of $\G$,
meaning that every other compactification is its quotient.
For a general topological group $\G$ the space $\G^{\luc}$ is the greatest ambit $\G$, 
i.e. the greatest $\G$-flow with a point whose orbit is dense.
In the sequel we identify $\G$ with its image
$\delta[\G]$ in $\G^{\luc}$,
so that $\G\subseteq\G^{\luc}\subseteq\lu$.
More on the subject can be found in
\cite{redbook} and in \cite{pestovbook}.

\begin{definition}
    \label{def:DTC}
Let $S$ be a subsemigroup of $\lu$ with the convolution operation
and the $w^*$-topology.
For $D\subseteq S$ write
\[
\Cont(S,D):=\{\,\mu\in S:
\forall \,\nu_0\iin D
\text{ the mapping } \nu\mapsto \mu\star\nu \text{ on } S
\text{ is continuous at } \nu_0 \}
\]
The \emph{topological centre of $S$} is
$
\Lambda(S):= \Cont(S,S)
$.
The set $D$ is said to be a \emph{DTC-set for $S$} iff
$\Lambda(S)=\Cont(S,D)$.
\end{definition}

In the literature $\Lambda(\lu)$ is often denoted by $Z_t(\lu)$.
Note that DTC-sets are interesting only when the group $\G$ is not precompact:
If $\G$ is precompact then $\Lambda(S)=S$
for every subsemigroup of $\lu$, and thus every subset of $S$
is a DTC-set.

In this paper we deal only with the DTC-sets of Definition~\ref{def:DTC}.
However, other variants of the DTC-set notion are also of interest.
In particular, for $D\subseteq S\cap\G^{\luc}$, if we let
\begin{align*}
\Cont_\G(S,D):= \{&\mu\iin S: \\
& \forall \,\nu_0\iin D
\text{ the mapping } \nu\mapsto \mu\star\nu \text{ on } (S\cap\G) \cup\{\nu_0\}
\text{ is continuous at } \nu_0 \} ,
\end{align*}
then the condition $\Lambda(S)=\Cont_\G(S,D)$ is stronger (more restrictive)
than the condition $\Lambda(S)=\Cont(S,D)$
in Definition~\ref{def:DTC}.
As is explained in~\cite[sec.2]{budak} and~\cite[Ch.12]{dales},
if $\G$ is any non-compact locally compact abelian group then
$\zt\neq\Cont_\G(\lu,\{\nu\})$ for every $\nu\iin\G^{\luc}$
but there exists $\nu_0\iin\G^{\luc}$ such that
$\zt=\Cont(\lu,\{\nu_0\})$.

For any $\Delta\in\rp(\G)$  we define
$$\BLip^+(\Delta):=\{f:\G\longrightarrow
[0,1]:|f(x)-f(y)|\le\Delta(x,y)\text{ for all
}x,y\in\G\},$$
$$\B(\Delta):=\{x\in\G:\Delta(e,x)<1\}.$$
\par
It is known that for a large class of topological
groups $\zt$ coincides with the space
$\UMeas(\G)$ of uniform measures on
the uniform space $\G$. One
of several equivalent definitions of
$\UMeas(\G)$ is that a functional
$\mu\in\lu$ is in $\UMeas(\G)$ if and
only if it is $\G$-pointwise
continuous on $\BLip^+(\Delta)$ for every
$\Delta\in\rp(\G)$. When $\G$ is locally compact,
$\UMeas(\G)$  can be identified with the space
of finite Radon measures on $\G$ (\cite[sec.7.3]{uniformmeasures}).
More about uniform measures,
with references to original sources, may be found
in \cite{uniformmeasures}.
\par

The key step in proving our results will be
a factorization theorem which has its roots in
\cite[Satz 3.6.2]{neufangthesis} and that was later
generalized in a number of ways by many authors
considering problems related to topological centres
(see for example \cite{budak}, \cite{ferrineufang}, \cite{equi}
and \cite{ambitable}). The next
section is devoted just to proving the appropriate
version of the theorem which we shall need in order
to prove our main result.

\section{The factorization theorem}

We start the section by defining some cardinal
numbers associated to
topological groups which will be needed in order to
state our results.

\begin{definition}
{\rm We say that a topological group $\G$ is
\emph{$\kappa$-bounded\/}, where $\kappa$ is an
infinite cardinal, if and only if for every neighbourhood {$U$} of the identity in $\G$
there is a set $A\subseteq\G$ with $|A|\le\kappa$ such that $\G=UA$. 
This is equivalent to saying that for every $\Delta\in\rp(\G)$ there is a set
$A\subseteq\G$ with $|A|\le\kappa$ such that $\G=\B(\Delta)A$. 
We denote by $\mathfrak{B}_{\G}$ the least infinite cardinal for
which $\G$ is $\kappa$-bounded.

Given $\Delta\in\rp(\G)$ we denote by
$\eta^{\sharp}(\Delta)$ the least (finite or
infinite) cardinal $\kappa$ for which there exists
a subset $A\subseteq\G$ of cardinality $\kappa$ and
such that $\G=\B(\Delta)A$
and we denote by
$\eta(\Delta)$ the least cardinal number
$\kappa$ such that there exists a set
$A\subseteq\G$ with $|A|\le\kappa$ and a finite set
$K\subseteq\G$ such that $\G=KB(\Delta)A$.\/}
\end{definition}
We are ready to state and prove the factorization
theorem.
\begin{theorem}\label{th:factorization}
Let $\G$ be a topological group and let
$\Delta_1\in\rp(\G)$ with
$\eta(\Delta_1)={\mathfrak B}_{\G}$, then there
exists a family
$$\{\nu_{\psi}\in\G^{\luc}\setminus\G:\psi\in{\mathfrak
B}_{\G}\}$$
such that for every $\Delta\in\rp(\G)$ such that
$\Delta\ge\Delta_1$ and for every family
$$\{h_{\psi}\in\BLip^+(\Delta):\psi\in{\mathfrak
B}_{\G}\}$$
there is $h\in\BLip^+(2\Delta)$ for which
$$h_{\psi}=\nu_{\psi}\bullet h\qquad\text{for all
}\psi\in{\mathfrak B}_{\G}.$$
\end{theorem}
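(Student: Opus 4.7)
Set $\kappa:=\mathfrak{B}_\G$. Since $\eta(\Delta_1)=\kappa$ and $\eta(\cdot)\le\eta^\sharp(\cdot)\le\mathfrak{B}_\G$, one also has $\eta^\sharp(\Delta_1)=\kappa$. The plan is to construct, via a transfinite recursion of length $\kappa$, a doubly indexed array $\{x_{\psi,n}:\psi<\kappa,\,n<\omega\}\subseteq\G$ whose points are pairwise $\Delta_1$-separated with a gap growing in the indices; let $\nu_\psi$ be a $w^*$-cluster point of $(\delta(x_{\psi,n}))_n$ along a free ultrafilter on $\omega$; and, given $\Delta\ge\Delta_1$ and a family $\{h_\psi\}$, recover $h$ by pasting right-translates of the $h_\psi$ onto disjoint growing patches centred at the $x_{\psi,n}$, smoothed by a Lipschitz cut-off.

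\textbf{Step 1 (the sparse array).} Enumerating $\kappa\times\omega$ as $(\sigma_\tau)_{\tau<\kappa}$ in such a way that the ``level'' coordinate remains bounded on every proper initial segment, I would recursively pick $x_{\psi,n}\in\G$ satisfying
\[
\Delta_1(x_{\psi,n},x_{\psi',n'})\ \ge\ n+n'+2 \quad\text{for all distinct }(\psi,n)\ne(\psi',n').
\]
At each stage one must avoid a union of fewer than $\kappa$ $\Delta_1$-balls of bounded radius; the hypothesis $\eta(\Delta_1)=\mathfrak{B}_\G$ (together with the absorption of the finite set $K$ from the definition of $\eta$ into an enlarged covering constant, as in earlier factorization theorems) guarantees that such a union cannot exhaust $\G$, and this combinatorial step is the heart of the proof. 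Each sequence $(x_{\psi,n})_n$ is then $\Delta_1$-divergent, hence has no subnet converging in $\G$; any $w^*$-cluster point $\nu_\psi\in\lu$ along a free ultrafilter on $\omega$ therefore lies in $\G^{\luc}\setminus\G$.

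\textbf{Step 2 (the function $h$ and its Lipschitz bound).} For $\Delta\ge\Delta_1$ and $\{h_\psi\in\BLip^+(\Delta)\}$, set
\[
P_{\psi,n}:=\{z\in\G:\Delta_1(z,x_{\psi,n})<n+1\},\qquad \phi_n(t):=\max\bigl(0,\min(1,n+1-t)\bigr),
\]
and
\[
h(z):=\begin{cases}h_\psi\bigl(zx_{\psi,n}^{-1}\bigr)\,\phi_n\bigl(\Delta_1(z,x_{\psi,n})\bigr)&z\in P_{\psi,n},\\ 0&\text{otherwise.}\end{cases}
\]
The patches are pairwise disjoint by Step 1, so $h$ is a well-defined $[0,1]$-valued function. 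To verify $h\in\BLip^+(2\Delta)$ I run a four-way case analysis on $z,z'\in\G$. Inside one patch, right-invariance ($\Delta(zx^{-1},z'x^{-1})=\Delta(z,z')$) combined with the product rule for $ab-a'b'$ and the $1$-Lipschitz property of $\phi_n$ gives $|h(z)-h(z')|\le\Delta(z,z')+\Delta_1(z,z')\le 2\Delta(z,z')$. In the cross-patch case, and in the patch-to-complement case, the pointwise estimate $h(z)\le\max\bigl(0,n+1-\Delta_1(z,x_{\psi,n})\bigr)$ together with the separation from Step 1 yields $h(z)+h(z')\le (n+1-\Delta_1(z,x_{\psi,n}))+(n'+1-\Delta_1(z',x_{\psi',n'}))\le\Delta_1(z,z')\le\Delta(z,z')$, which dominates $|h(z)-h(z')|$.

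\textbf{Step 3 (the identity $\nu_\psi\bullet h=h_\psi$ and the main obstacle).} Fix $y\in\G$. By right-invariance of $\Delta_1$, $\Delta_1(yx_{\psi,n},x_{\psi,n})=\Delta_1(y,e)$; hence for every $n>\Delta_1(y,e)$ the point $yx_{\psi,n}$ lies in the core of $P_{\psi,n}$ where $\phi_n\equiv 1$, so $h(yx_{\psi,n})=h_\psi\bigl(yx_{\psi,n}x_{\psi,n}^{-1}\bigr)=h_\psi(y)$. This holds $\mathcal{U}_\psi$-cofinitely, so $(\nu_\psi\bullet h)(y)=\lim_{n\to\mathcal{U}_\psi}h(yx_{\psi,n})=h_\psi(y)$, as required. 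The principal obstacle throughout is Step~1: organising the transfinite construction so that, at every stage $\tau<\kappa$, the fewer-than-$\kappa$ forbidden balls genuinely fail to cover $\G$. Once the sparse array is in place, Steps~2 and~3 reduce to careful but routine bookkeeping in the right-invariant pseudometric $\Delta_1$ and the variable ambient $\Delta$.
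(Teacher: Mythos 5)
There is a genuine gap, and it sits exactly where you locate ``the heart of the proof'': Step~1 cannot be carried out under the stated hypotheses. The separation requirement $\Delta_1(x_{\psi,n},x_{\psi',n'})\ge n+n'+2$ presupposes that $\Delta_1$ takes arbitrarily large values, but a right-invariant continuous pseudometric on a topological group may be bounded (e.g.\ on an infinite discrete group the $\{0,1\}$-valued metric lies in $\rp(\G)$ and satisfies $\eta(\Delta_1)=|\G|=\mathfrak{B}_\G$, since $\B(\Delta_1)=\{e\}$); for such $\Delta_1$ no two points are at distance $\ge 3$, so the sparse array does not exist. Even when $\Delta_1$ is unbounded, the hypothesis $\eta(\Delta_1)=\mathfrak{B}_\G$ only forbids coverings of $\G$ by fewer than $\mathfrak{B}_\G$ translates of $K\B(\Delta_1)$ with $K$ finite and $\B(\Delta_1)$ the \emph{unit} ball; it says nothing about coverings by balls of radius $n+n'+2$, and a ball of radius $R>1$ need not be contained in $K\B(\Delta_1)$ for any finite $K$ (think of a ball of radius $2$ in a free group of infinite rank). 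The parenthetical ``absorption of the finite set $K$ into an enlarged covering constant'' runs in the wrong direction: the finite set in the definition of $\eta$ enlarges the \emph{covering} sets, it does not convert large balls into finitely many unit balls. Your Steps~2 and~3 are essentially sound conditional on Step~1, but Step~1 is where the general (non-locally-compact) case genuinely differs from the locally compact picture you are implicitly transplanting.

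The paper's proof avoids this by keeping all separations at the fixed scale $1$ and letting the \emph{finite sets}, not the radii, grow: it takes a $\Delta_1$-dense set $D$ of cardinality $\mathfrak{B}_\G$ and, using the combinatorial lemma behind the definition of $\eta$ (Lemma~8 of the ambitability paper), chooses points $x_{(\psi,K)}$ indexed by $\psi\in\mathfrak{B}_\G$ and finite $K\subseteq D$ with $\Delta_1(Kx_{(\psi,K)},Lx_{(\varphi,L)})>1$ for distinct indices. The bump functions $u_K(x)=(1-2\Delta_1(x,K))^+$ are supported near the finite set $K$ and increase to $1$ pointwise as $K$ grows along $\mathcal{P}_f(D)$, which is what replaces your ``core of a growing ball''; the cluster points $\nu_\psi$ are taken along the net in $K$, and $\nu_\psi\notin\G$ is deduced indirectly (choosing $h_\psi=0$, $h_\varphi=1$) rather than from metric divergence. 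If you want to repair your argument, you must replace growing-radius patches by growing-finite-set patches in this way; as written, the construction fails for bounded $\Delta_1$ and hence does not prove the theorem.
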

\begin{proof}
By \cite[Lemma 7]{ambitable}, $\G$ has a
$\Delta_1$-dense subset $D$ of cardinality
${\mathfrak B}_{\G}$. Denote by ${\cal P}_f(D)$ the
set of all finite subsets of $D$ and set
$A:={\mathfrak B}_{\G}\times{\cal P}_f(D)$.
\par
By \cite[Lemma 8]{ambitable} there are
$x_{(\psi,K)}\iin\G$ for $(\psi,K)\iin A$ such that
$\Delta_1(Kx_{(\psi,K)},Lx_{(\varphi,L}))>1$ whenever
$(\psi,K)\neq(\varphi,L)$ are elements of
$A$.
\par
For every $\psi\in{\mathfrak B}_{\G}$ let
$\nu_{\psi}$ be a cluster point of the net
$(x_{(\psi,K)})_{K\in{\cal P}_f(D)}$, where ${\cal
P}_f(D)$ is ordered by reversed inclusion.
\par
Define a real-valued function $u_K$ in the
variable $x\in\G$ for every $K\in{\cal P}_f(D)$ by:
$$u_K(x):=(1-2\Delta_1(x,K))^+.$$
Then we have that $u_K\in\BLip^+(2\Delta_1)$ and
that $\displaystyle{\lim_K u_K=1}$ pointwise.
\par
Now take any $\Delta\in\rp(\G)$ with
$\Delta\ge\Delta_1$ and a family of functions
$\{h_{\psi}\in\BLip^+(\Delta):\psi\in{\mathfrak
B}_{\G}\}$. For every $x\in\G$ there is at most
one $(\psi,K)\in A$ such that
$u_K(xx_{(\psi,K)}^{-1})\neq 0$. We can then define
$$h(x):=\sup\{h_{\psi}(xx_{(\psi,K)}^{-1})\wedge
u_K(xx_{(\psi,K)}^{-1}):(\psi,K)\in A\}$$
and we have that $h\in\BLip^+(2\Delta)$. Take
$x\in\G$ and $\psi\in{\mathfrak B}_{\G}$. By
density, there is $y\in D$ with
$\Delta_1(x,y)\le\frac{1}{2}$. For every
$K\in{\cal P}_f(D)$ with
$y\in K$ we have that $h_{\psi}(x)\wedge
u_K(x)=h(xx_{(\psi,K)})=x_{(\psi,K)}\bullet h(x)$,
hence
$$h_{\psi}(x)=\lim_Kh_{\psi}(x)\wedge
u_K(x)=\lim_Kx_{(\psi,K)}\bullet h(x).$$
By \cite[Lemma 19]{ambitable} the mapping
$\nu\mapsto\nu\bullet h$ is continuous from
$\G^{\luc}$ to $\BLip^+(\Delta)$ with the
$\G$-pointwise topology, hence
$h_{\psi}(x)=\nu_{\psi}\bullet h(x)$.
Finally, $\nu_{\psi}\notin\G$ because
from $\nu_{\psi}\in\G$ we would get a contradiction by
choosing $h_{\psi}=0$ and $h_{\varphi}=1$ for
$\psi\neq\varphi$.
\end{proof}

\section{DTC-sets for $\zt$ and $\Lambda(\G^{\luc})$}
We are finally ready to give the main results of
this note. We begin with a technical lemma.

\begin{lemma}\label{lemma:technical}
Let $\G$ be a $\kappa$-bounded topological group
(where $\kappa$ is an infinite cardinal). The
following properties of a functional
$\mu\in\lu$ are equivalent:
\begin{itemize}
\item[{\rm (i)\/}] $\mu\in\UMeas(\G)$.
\item[{\rm (ii)\/}] If $\Delta\in\rp(\G)$ and
$(h_{\psi})_{\psi\in\Psi(\kappa)}$ is a net
in $\BLip^+(\Delta)$ indexed by the
set $\Psi(\kappa):={\cal P}_f(\kappa)\times\omega$
(ordered by $(K,i)\le(L,j)$ if and only if
$K\subseteq L$ and $i\le j$) which converges
pointwise to $0$, then $0$ is a cluster point of
the net $(\mu(h_{\psi}))_{\psi}$.
\item[{\rm (iii)\/}] The restriction of $\mu$ to
$\BLip^+(\Delta)$ is $\G$-pointwise continuous at
$0\in\BLip^+(\Delta)$ for every
$\Delta\in\rp(\G)$.
\end{itemize}
\end{lemma}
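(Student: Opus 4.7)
The plan is to run the cycle $(i)\Rightarrow(iii)\Rightarrow(ii)\Rightarrow(iii)\Rightarrow(i)$, two of whose steps are essentially tautological. Indeed, $(i)\Rightarrow(iii)$ is just the observation that $\G$-pointwise continuity on all of $\BLip^+(\Delta)$ implies $\G$-pointwise continuity at the single point $0$; and $(iii)\Rightarrow(ii)$ holds because, under (iii), any net $(h_\psi)$ in $\BLip^+(\Delta)$ converging pointwise to $0$ satisfies $\mu(h_\psi)\to 0$, so in particular $0$ is a cluster point of the numerical net.

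To pass from (iii) to (i) I would use a positive/negative part decomposition. Given a net $(h_\alpha)$ in $\BLip^+(\Delta)$ converging pointwise to $h\in\BLip^+(\Delta)$, set $p_\alpha:=(h_\alpha-h)^+$ and $q_\alpha:=(h_\alpha-h)^-$. Both take values in $[0,1]$ and are $2\Delta$-Lipschitz (since $u\mapsto u^+$ is $1$-Lipschitz while $h_\alpha-h$ is itself $2\Delta$-Lipschitz), and both converge pointwise to $0$; hence $p_\alpha,q_\alpha\in\BLip^+(2\Delta)$. Applying (iii) with the pseudometric $2\Delta\in\rp(\G)$ gives $\mu(p_\alpha),\mu(q_\alpha)\to 0$, so $\mu(h_\alpha)-\mu(h)=\mu(p_\alpha)-\mu(q_\alpha)\to 0$.

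The main work is $(ii)\Rightarrow(iii)$, which I would prove by contraposition. Assuming (iii) fails, fix $\Delta\in\rp(\G)$ and a net $(h_\alpha)$ in $\BLip^+(\Delta)$ with $h_\alpha\to 0$ pointwise but $\mu(h_\alpha)\not\to 0$; passing to a subnet, one may assume $\mu(h_\alpha)\to c$ with $|c|>0$. I would then invoke $\kappa$-boundedness to obtain a $\Delta$-dense subset $A=\{a_\xi:\xi<\kappa\}\subseteq\G$ of cardinality at most $\kappa$, by choosing for each $n$ a set $A_n$ with $|A_n|\le\kappa$ and $\G=\B(n\Delta)A_n$ and taking $A:=\bigcup_n A_n$. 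For every $\psi=(K,i)\in\Psi(\kappa)$, pick an index $\alpha(K,i)$ such that $h_{\alpha(K,i)}(a_\xi)<1/(i+1)$ for all $\xi\in K$ and simultaneously $|\mu(h_{\alpha(K,i)})-c|<1/(i+1)$; this is possible because both are eventual conditions on the original net. Finally set $g_\psi:=h_{\alpha(K,i)}$.

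It remains to verify that $(g_\psi)$ witnesses the failure of (ii). Plainly $(g_\psi)\subseteq\BLip^+(\Delta)$. For pointwise convergence to $0$, given $x\iin\G$ and $\delta>0$, choose $\xi_0<\kappa$ with $\Delta(x,a_{\xi_0})<\delta/2$ and $i_0$ with $1/(i_0+1)<\delta/2$; then for $\psi=(K,i)\ge(\{\xi_0\},i_0)$, combining $g_\psi(a_{\xi_0})<1/(i+1)<\delta/2$ with $|g_\psi(x)-g_\psi(a_{\xi_0})|\le\Delta(x,a_{\xi_0})<\delta/2$ yields $g_\psi(x)<\delta$. On the other hand $|\mu(g_\psi)|\to|c|>0$, so $0$ is not a cluster point of $(\mu(g_\psi))$, contradicting (ii). The delicate point, and the place where the cardinality hypothesis really pulls its weight, is precisely the passage from an arbitrary ``bad'' net of functions in $\BLip^+(\Delta)$ to one reindexed by $\Psi(\kappa)=\mathcal{P}_f(\kappa)\times\omega$: it is $\kappa$-boundedness that supplies the single $\Delta$-dense set of cardinality $\kappa$ through which pointwise convergence on all of $\G$ can be organised.
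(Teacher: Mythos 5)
Your proposal is correct and follows essentially the same route as the paper: the substantive steps are the positive/negative part decomposition $(h_\alpha-h)^{\pm}$ for $(iii)\Rightarrow(i)$ and the contrapositive re-indexing of a bad net over $\Psi(\kappa)$ via a $\Delta$-dense set of cardinality at most $\kappa$ for $(ii)\Rightarrow(iii)$, exactly as in the paper. The only cosmetic differences are that you derive the dense set directly from $\kappa$-boundedness (where the paper cites \cite[Lemma 7]{ambitable}) and that you route $(i)\Rightarrow(ii)$ through $(iii)$ rather than observing it directly.
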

\begin{proof}
Evidently, (i) implies (ii).
\par
To prove that (ii) implies (iii), take any $\mu$
that does not have
the property stated in (iii). There is
$\Delta\in\rp(\G)$ for which the restriction of
$\mu$ to $\BLip^+(\Delta)$ is not $\G$-pointwise
continuous at $0$. By \cite[Lemma 7]{ambitable},
$\G$ has a $\Delta$-dense subset $D$ with
cardinality smaller or equal than $\kappa$. Fix a
surjection $\alpha:\kappa\longrightarrow D$. For
every $\psi=(K,i)\in\Psi(\kappa)$ let $U_{\psi}$
be the $D$-pointwise neighbourhood
$$\{f\in\BLip^+(\Delta):f(\alpha(x))<\frac{1}{i+1}\text{
for every }x\in K\}$$
of $0$ in $\BLip^+(\Delta)$. There is
$\varepsilon>0$ such that for every
$\psi\in\Psi(\kappa)$ there is $h_{\psi}\in
U_{\psi}$ for which $|\mu(h_{\psi})|>\varepsilon$.
Since the $\G$-pointwise and the $D$-pointwise
topology coincide on $\BLip^+(\Delta)$, the net
$(h_{\psi})_{\psi}$ converges $\G$-pointwise to
$0$. Hence $\mu$ does not have property (ii).
\par
In order to prove that (iii) implies (i) take any
net $(f_{\gamma})_{\gamma}$ in $\BLip^+(\Delta)$
converging $\G$-pointwise to a function
$f\in\BLip^+(\Delta)$. Then the functions
$(f_{\gamma}-f)^+$ and $(f_{\gamma}-f)^-$ are in
$2\BLip^+(\Delta)$ and converge $\G$-pointwise to
$0$.
\end{proof}

\par
We are ready to state the main result of this
paper, answering the question raised in \cite{dales1}.
\begin{theorem}\label{th:mainresult} Let $\G$ be a topological group
with the following property:
\begin{itemize}
\item[{\rm ($\dagger$)}] There exists $\Delta_0\in\rp(\G)$
such that $\eta^{\sharp}(\Delta_0)=\mathfrak{B}_{\G}$.
\end{itemize}
Then there are $\nu\in\G^{\luc}\setminus\G$ and a
net $(\nu_{\gamma})_{\gamma\in\Gamma}$ in
$\G^{\luc}\setminus\G$ such that:
\begin{itemize}
\item[{\rm ({\bf 1}.)}]
$\displaystyle{\lim_{\gamma\in\Gamma}\nu_{\gamma}=\nu}$,
with the limit taken in $\G^{\luc}$; and
\item[{\rm ({\bf 2}.)}] if $\mu\in\lu$ and
$\displaystyle{w^*\hskip-2mm
-\hskip-1mm\lim_{\gamma\in\Gamma}\mu\star
\nu_{\gamma}=\mu\star\nu}$ then
$\mu\in\UMeas(\G)$.
\end{itemize}
\end{theorem}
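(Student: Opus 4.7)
The plan is to combine the factorization theorem (Theorem~\ref{th:factorization}) with the characterization of uniform measures in Lemma~\ref{lemma:technical}. Property $(\dagger)$ supplies $\Delta_0\iin\rp(\G)$ with $\eta^{\sharp}(\Delta_0)=\mathfrak{B}_\G$; from this I would extract a pseudometric $\Delta_1\iin\rp(\G)$ with $\eta(\Delta_1)=\mathfrak{B}_\G$, so that Theorem~\ref{th:factorization} applies and produces a family $\{\nu_\psi:\psi\iin\mathfrak{B}_\G\}\subseteq\G^{\luc}\setminus\G$ enjoying the factorization property for every $\Delta\ge\Delta_1$. Since the directed set $\Psi(\mathfrak{B}_\G):={\cal P}_f(\mathfrak{B}_\G)\times\omega$ appearing in Lemma~\ref{lemma:technical} has cardinality $\mathfrak{B}_\G$, I would reindex the family as $\{\nu_\psi:\psi\iin\Psi(\mathfrak{B}_\G)\}$, so that Theorem~\ref{th:factorization} applies verbatim to any family of functions in $\BLip^+(\Delta)$ indexed by $\Psi(\mathfrak{B}_\G)$.

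Next, I view $(\nu_\psi)_{\psi\iin\Psi(\mathfrak{B}_\G)}$ as a net in the compact space $\G^{\luc}$ and extract a convergent subnet $(\nu_{\psi(\gamma)})_{\gamma\iin\Gamma}$ with limit $\nu\iin\G^{\luc}$, yielding~(\textbf{1}). The separation $\Delta_1(Kx_{(\psi,K)},Lx_{(\varphi,L)})>1$ enforced inside the proof of Theorem~\ref{th:factorization} precludes cluster points in $\G$ (a limit in $\G$ would force two distinct $x_{(\psi,K)}$ to become $\Delta_1$-close), so $\nu\iin\G^{\luc}\setminus\G$.

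For~(\textbf{2}), assume $\mu\iin\lu$ and $w^{*}\text{-}\lim_{\gamma}\mu\star\nu_{\psi(\gamma)}=\mu\star\nu$. By Lemma~\ref{lemma:technical} it suffices to prove that for every $\Delta\iin\rp(\G)$ (which, after replacing $\Delta$ by $\max(\Delta,\Delta_1)$, I may take to dominate $\Delta_1$) and every net $(h_\psi)_{\psi\iin\Psi(\mathfrak{B}_\G)}$ in $\BLip^+(\Delta)$ converging pointwise to $0$, zero is a cluster point of $(\mu(h_\psi))_\psi$. Given such a net, Theorem~\ref{th:factorization} produces $h\iin\BLip^+(2\Delta)$ satisfying $h_\psi=\nu_\psi\bullet h$ for every $\psi$, hence $\langle\mu\star\nu_\psi,h\rangle=\mu(h_\psi)$. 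Along the chosen subnet, the $w^*$-hypothesis gives $\mu(h_{\psi(\gamma)})\longrightarrow\langle\mu\star\nu,h\rangle$. On the other hand, testing the $w^*$-convergence $\nu_{\psi(\gamma)}\to\nu$ against each $\L_x h\iin\luc(\G)$ gives $h_{\psi(\gamma)}(x)=(\nu_{\psi(\gamma)}\bullet h)(x)\longrightarrow(\nu\bullet h)(x)$ pointwise, while the same values converge to $0$ as a subnet of the pointwise-null net $(h_\psi)$. Therefore $\nu\bullet h\equiv 0$ and $\langle\mu\star\nu,h\rangle=\langle\mu,\nu\bullet h\rangle=0$, so $\mu(h_{\psi(\gamma)})\to 0$; consequently $0$ is a cluster point of the full net $(\mu(h_\psi))_\psi$, and Lemma~\ref{lemma:technical} forces $\mu\iin\UMeas(\G)$.

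The principal obstacle I anticipate is the indexing bookkeeping: Theorem~\ref{th:factorization} delivers its family over the bare cardinal $\mathfrak{B}_\G$, whereas Lemma~\ref{lemma:technical} requires nets over the richer directed set $\Psi(\mathfrak{B}_\G)$. The cardinality identity $|\Psi(\mathfrak{B}_\G)|=\mathfrak{B}_\G$ reconciles the two, but the roles of the factorization \emph{family} of functionals, the convergent \emph{subnet} used in~(\textbf{1}), and the auxiliary \emph{net of functions} in $\BLip^+(\Delta)$ must be carefully disentangled. A secondary technical point is the passage from $\eta^{\sharp}(\Delta_0)=\mathfrak{B}_\G$, which is the content of $(\dagger)$, to the hypothesis $\eta(\Delta_1)=\mathfrak{B}_\G$ required by Theorem~\ref{th:factorization}.
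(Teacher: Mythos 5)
Your proposal follows essentially the same route as the paper's proof: obtain $\Delta_1\in\rp(\G)$ with $\eta(\Delta_1)={\mathfrak B}_{\G}$ from ($\dagger$), apply Theorem~\ref{th:factorization} over the index set $\Psi({\mathfrak B}_{\G})={\cal P}_f({\mathfrak B}_{\G})\times\omega$ (using $|\Psi({\mathfrak B}_{\G})|={\mathfrak B}_{\G}$), pass to a convergent subnet for ({\bf 1}.), and for ({\bf 2}.) combine the factorization $h_\psi=\nu_\psi\bullet h$ with the $\G$-pointwise continuity of $\nu\mapsto\nu\bullet h$ (which you reprove inline where the paper cites \cite[Lemma 19]{ambitable}) and with Lemma~\ref{lemma:technical}(ii). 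The one step you flag but leave open --- passing from $\eta^{\sharp}(\Delta_0)={\mathfrak B}_{\G}$ to the existence of $\Delta_1$ with $\eta(\Delta_1)={\mathfrak B}_{\G}$ --- is exactly what the paper imports as \cite[Theorem 5]{ambitable}, so your outline is complete modulo that citation.
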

\begin{proof}
Since $\G$ has property ($\dagger$), by \cite[Theorem 5]{ambitable} there is
$\Delta_1\in\rp(\G)$ such that
$\eta(\Delta_1)={\mathfrak B}_{\G}$.
Write $\Psi:={\cal P}_f({\mathfrak B}_{\G})\times\omega$
and note that $|\Psi|=|{\mathfrak B}_{\G}|$.
Let
$\{\nu_{\psi}\in\G^{\luc}\setminus\G:\psi\in\Psi \}$ be as
in Theorem \ref{th:factorization}
with $\Psi$ in place of ${\mathfrak B}_{\G}$.
The net
$(\nu_{\psi})_{\psi}$ has a subnet
$(\nu_{\gamma})_{\gamma}$ converging to a limit
$\nu\in\G^{\luc}$.
\par
Take any $\mu\in\lu$ such that
$w^*\hskip-1mm-\lim_{\gamma}\mu\star\nu_{\gamma}=\mu\star\nu$
in $\lu$. Take any $\Delta\in\rp(\G)$ and a
net $(h_{\psi})_{\psi}$ in $\BLip^+(\Delta)$
indexed by $\Psi$ and
converging pointwise to $0$. By Theorem
\ref{th:factorization} there is
$h\in\BLip^+(2\Delta)$ such that
$h_{\psi}=\nu_{\psi}\bullet h$ for all
$\psi\in\Psi$. By \cite[Lemma 19]{ambitable}, the
mapping $\nu\mapsto\nu\bullet h$ is continuous from
$\G^{\luc}$ to $\BLip^+(2\Delta)$ with the
$\G$-pointwise topology, hence $\nu\bullet h=0$.

Since
$$\lim_{\gamma}\mu(\nu_{\gamma}\bullet
h)=\lim_{\gamma}\mu\star\nu_{\gamma}(h)=\mu\star\nu(h)=\mu(\nu\bullet
h)=0$$
and $(\mu(\nu_{\gamma}\bullet h)\}_{\gamma}$ is a
subnet of $(\mu(h_{\psi}))_{\psi}$, we have that
$0$ is a cluster point of this last net and so
$\mu\in\UMeas(\G)$ by Lemma
\ref{lemma:technical}.
\end{proof}
Note that if $\G$ is precompact then
$\eta^{\sharp}(\Delta)$ is finite for every
$\Delta\in\rp(\G)$ and it is known that in this
case $\zt=\UMeas(\G)$. Thus property
($\dagger$) implies that $\G$ is not precompact.
\par
As a direct corollary to Theorem
\ref{th:mainresult} we obtain the following result.
\begin{corollary}
Let $\G$ be a topological group
with property \emph{($\dagger$)},
and let $S$ be a subsemigroup of $\lu$ such that
$S\supseteq\G^{\luc}\setminus\G$.
Then there exists $\nu_0\in S$ such that
$$
\Lambda(S)= \UMeas(\G) \cap S
= \{\mu\in S:
\text{ the mapping } \nu\mapsto \mu\star\nu \text{ on } S
\text{ is continuous} \text{ at } \nu_0 \}.
$$
\end{corollary}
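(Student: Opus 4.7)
I would take $\nu_0$ to be the limit $\nu$ produced by Theorem~\ref{th:mainresult}. Since that theorem explicitly places $\nu$ in $\G^{\luc}\setminus\G$, the hypothesis $S\supseteq\G^{\luc}\setminus\G$ immediately gives $\nu_0\iin S$. The proof then reduces to establishing the cycle of inclusions
\[
\Lambda(S) \;\subseteq\; \bigl\{\mu\iin S:\nu\mapsto\mu\star\nu \text{ is continuous at }\nu_0\bigr\} \;\subseteq\; \UMeas(\G)\cap S \;\subseteq\; \Lambda(S),
\]
which collapses all three sets to a common value.

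The first inclusion is tautological from the definition of $\Lambda(S)$, which requires continuity at every point of $S$ and hence in particular at $\nu_0$. The second inclusion is the heart of the matter: if $\mu\iin S$ has $\nu\mapsto\mu\star\nu$ continuous at $\nu_0$, then since the net $(\nu_\gamma)$ of Theorem~\ref{th:mainresult} lies in $\G^{\luc}\setminus\G\subseteq S$ and converges to $\nu_0$ in the $w^*$-topology of $\lu$, the continuity assumption gives $\mu\star\nu_\gamma\to\mu\star\nu_0$ in the $w^*$-topology. Property~({\bf 2}) of Theorem~\ref{th:mainresult} then forces $\mu\iin\UMeas(\G)$, hence $\mu\iin\UMeas(\G)\cap S$.

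The third inclusion invokes the previously established result of~\cite{ambitable} that for any topological group satisfying property~$(\dagger)$ one has $\zt=\UMeas(\G)$. Any $\mu\iin\UMeas(\G)\cap S$ therefore has $w^*$-$w^*$ continuous left convolution on all of $\lu$; its restriction to the subsemigroup $S$ is continuous in the inherited $w^*$-topology, and since $S$ is closed under $\star$ the map sends $S$ into $S$. Thus $\mu\iin\Lambda(S)$, and the cycle closes. The only step requiring real work is the second inclusion, and that work is essentially already absorbed into Theorem~\ref{th:mainresult}; the corollary is a clean packaging of that theorem together with the identification $\zt=\UMeas(\G)$.
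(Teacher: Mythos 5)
Your proposal is correct and follows essentially the same route as the paper: choose $\nu_0=\nu$ from Theorem~\ref{th:mainresult} and close the cycle $\Lambda(S)\subseteq\Cont(S,\{\nu_0\})\subseteq\UMeas(\G)\cap S\subseteq\Lambda(S)$. The only (harmless) divergence is in the last inclusion, where the paper cites the general fact that uniform measures act $\weakstartweakstar$-continuously on $\lu$ (valid for every topological group), whereas you invoke the stronger equality $\zt=\UMeas(\G)$ under property ($\dagger$); both suffice here.
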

\begin{proof}
By~\cite[Prop. 4.2]{ferrineufang} or~\cite[Cor. 9.36]{uniformmeasures} we have
$\UMeas(\G) \cap S \subseteq \Lambda(S)$.
By Theorem~\ref{th:mainresult} there exists $\nu_0\in S$ such that
$\Cont(S,\{\nu_0\})\subseteq\UMeas(\G) \cap S$,
and obviously $\Lambda(S)\subseteq\Cont(S,\{\nu_0\})$.
\end{proof}

Thus, for any $\G$ with property ($\dagger$), any subsemigroup $S$ of $\lu$ containing
$\G^{\luc}\setminus\G$ has a one-point DTC-set.

It is easy to see that every non-compact
locally compact group has property ($\dagger$):
Simply take any $\Delta_0\in\rp(\G)$ such that
$\B(\Delta_0)$ is relatively compact.
Thus Theorem \ref{th:mainresult} and its corollary
generalize the recent
results of Budak, I\c sik and Pym \cite{budak}, who
proved the same for non-compact locally compact
groups, and therefore the existence of one-point DTC-sets for
$\lu$ and for $\G^{\luc}$ for such groups.

Many other (not necessarily locally compact)
groups also have property ($\dagger$):
From the definition,
if ${\mathfrak B}_{\G}=\aleph_0$ and $\G$ is not precompact
then $\G$ has property ($\dagger$);
and if ${\mathfrak B}_{\G}$ is a successor cardinal
then $\G$ has property ($\dagger$).
\par
It is an open problem whether the property ($\dagger$)
may be omitted in Theorem~\ref{th:mainresult} or in its corollary
(for non-precompact topological groups).
\par
\par

\bigskip
\hrule
\bigskip

\noindent\textsc{Stefano Ferri, Departamento de Matem\'aticas,
Universidad de los Andes, Carrera 1 18A--10, Bogot\'a D.C.,
Colombia.  Apartado A\'ereo 4976.}

\noindent E-mail: {\tt stferri@uniandes.edu.co}
\medskip

\noindent\textsc{Matthias Neufang, School of Mathematics and
Statistics, Carleton University, Ottawa (ON), Canada K1S 5B6 and
Universit\'e de Lille--1, UFR de Math\'e\-matiques, 59655 Villeneuve
d'Ascq, France}.

\noindent E-mail: {\tt mneufang@math.carleton.ca},
{\tt Matthias.Neufang@math.univ-lille1.fr}
\medskip

\noindent\textsc{Jan Pachl, Fields Institute, 222
College Street, Toronto (ON), Canada M5T 3J1.}

\noindent E-mail: {\tt jpachl@fields.utoronto.ca}
\end{document}